\DeclareMathAlphabet\mathpzc{OT1}{pzc}{m}{it}
\newcommand\rest[1]{\mathord{\restriction}_{#1}}
\newcommand{\Ba}{\mathpzc{Ba}}
\title{On the pointwise limits of sequences of \'Swi\k{a}tkowski functions}
\author[Tomasz Natkaniec, Julia W\'odka]{Tomasz Natkaniec*, Julia W\'odka**}
\address{\llap{*\,}Institute of Mathematics, Faculty of Mathematics, Physics, and Informatics, University of Gda\'nsk, ul. Wita Stwosza 57, 80--952 Gdañsk, POLAND}
\address{\llap{**\,} \L\'od\'z University of Technology,
Institute of Mathematics,
ul.\ W\'ol\-cza\'n\-ska~21,
90--924 \L\'od\'z,
POLAND}
\email{\llap{*\,}Tomasz.Natkaniec@mat.ug.edu.pl} 
\email{\llap{**\,}JuliaWodka@gmail.com}
\subjclass[2010]{Primary 26A21; Secondary 26A15, 54C08, 54C30} 
\keywords{\'Swi\k{a}tkowski functions, cliquish functions, pointwise limits, $^\ast$topology of Hashimoto; $\mathcal{I}$-density topology; density topology}
\newtheorem{thm}{Theorem}
\newtheorem{lem}[thm]{Lemma}
\theoremstyle{definition}
\newtheorem{exa}{Example}
\newtheorem{cor}[thm]{Corollary}
\newtheorem{fact}{Fact}
\newcommand\mN{{\mathbb N}}
\newcommand{\mR}{{\mathbb R}}
\newcommand{\mQ}{{\mathbb Q}}
\newcommand{\mZ}{{\mathbb Z}}
\newcommand\co{{\mathcal C}}
\newcommand\st{:\,}
\DeclareMathOperator\Cl{cl}
\DeclareMathOperator\diam{diam}
\newcommand{\F}{\mathscr F}
\newcommand{\I}{\mathcal I}
\newcommand{\Sss}{\mathscr{S}_s}
\newcommand{\s}{\mathscr{S}}
\newcommand{\Cq}{\mathscr C_q}
\newcommand{\B}{\mathscr B}
\newcommand{\CBS}{\mathscr {CBS}}
\DeclareMathOperator{\interior}{int}
\DeclareMathOperator{\cl}{cl}
\DeclareMathOperator{\fr}{fr}
\newcommand{\LIM}{\mathrm{LIM}}
\newcommand{\osc}{\mathrm{osc}}
\begin{document}
\begin{abstract}
The characterization of the pointwise limits of the sequences of \'Swi\k{a}tkowski functions is given. Modifications of \'Swi\k{a}tkowski property with respect to different topologies finer than the Euclidean topology are discussed.
\end{abstract}\maketitle

\section{Intrduction}
In 1977 Ma\'nk and \'Swi\k{a}tkowski defined a new property of real functions, being a kind of intermediate value property, so similar to the Darboux property: \emph{for all $a<b$ with $f(a)\ne f(b)$, there is  $x\in(a,b) \cap \co(f)$ such that $f(x)$ is between $f(a)$ and $f(b)$}~\cite{MS}.
It seems that motivations for study of such property derived from search for the weakest conditions that imply monotonicity of functions.

Ma\'nk and \'Swi\k{a}tkowski 
called mentioned condition \textit{``condition $\gamma$''} and asked if the family of Baire one Darboux functions satisfying the condition $\gamma$ are closed under the uniform limits and sums with continuous functions (both answers are positive).  Algebraic properties of the class of all functions possessing this property have been studied by many authors.
In particular, Maliszewski and the second author gave the characterization of products of \'Swi\k{a}tkowski functions -- \cite{MW} and \cite{MW1}.  W\'odka investigated the level of algebrability of the sets connected with the \'Swi\k{a}tkowski condition in the paper~\cite{JW1}.
Other results concerning \'Swi\k{a}tkowski functions can be found in \cite{FIW} and \cite{KS}. 
This note is a continuation of \cite{JW}, where uniform limits of sequences of \'Swi\k{a}tkowski functions are characterized.

\section{Preliminaries}
We shall start with some basic notations and definitions. 
We use a standard set-theoretic and topological notation. In particular, the
letter~$\mR$ denotes the real line with the Euclidean topology $\tau_e$ . Symbols $\mN$, $\mZ$, and $\mQ$ denote the sets of natural numbers, integers and rationals, respectively. 

For $A\subset \mR$ we denote by $\interior{A}$, $\cl{A}$ and $\fr{A}$ the interior, closure and boundary of $A$, respectively. 
For $a,b\in\mR$ the symbol $I(a,b)$ denotes the interval with end-points $a$ and $b$.
The set of all continuity points of a function $f:\mR\to\mR$ is denoted by~$\co(f)$.

For $x\in\mR$ and a non-empty set $A\subset\mR$ let $D(x,A)$ denote the distance between $x$ and $A\subset\mR$, i.e.,  $D(x,A):=\inf\{|x-t|\st t\in A\}$.

A set $A\subset\mR$ has \emph{the Baire property}  if there exist: an open set $O$ and a meager set $M$ such that $A=O\triangle M= (O\setminus M)\cup(M\setminus O)$. The algebra of all sets possessing the Baire property is denoted by $\mathcal{B}$. The ideal of meager sets is denoted by $\mathcal{M}$. A set $A$ is residual  if $\mR\setminus A$ is meager. We say that a set $A$ is nowhere meager in an open set $U\subset\mR$ if $A\cap W$ is non-meager for every non-empty open subset $W\subset U$.

The symbols $\osc(f,x)$ and $\osc_{\tau}(f,x)$ denote the oscillation and the oscillation with respect to a topology $\tau$ of a function $f$ at a point $x\in\mR$, respectively.

For a family $\F\subset\mR^\mR$ the symbol $\LIM(\F)$ denotes the family of all pointwise limits of the sequences $(f_n)_n$ from $\F$.

We will consider the following classes of functions from $\mR$ into $\mR$.
\begin{description}
\item[$\Ba$]
the class of all functions with the Baire property. A function $f:\mR\to\mR $ has {the Baire property} if the preimage of every open set has the Baire property.
\item[$\Cq$]
the class of all cliquish functions. We say that $f:\mR\to\mR$~is \emph{cliquish}, if for all $a<b$ and each $\varepsilon>0$ there is a nondegenerate interval $I\subset (a,b)$ such that $\diam f[I] <\varepsilon$~\cite{Thielman}. It is well-known and easy to see that $f:\mR\to\mR$ is cliquish if and only if is \emph{pointwise discontinuous}, i.e., it has a dense set of points of continuity (cf. \cite{KK, KKur}). 
\item[$\s$]
the class of all \emph{\'Swi\k{a}tkowski functions}, i.e., all functions possessing the following property: for all $a<b$ with $f(a)\ne f(b)$, there is $x\in(a,b) \cap \co(f)$ such that $f(x)\in I(f(a),f(b))$ \cite{MS}, cf. \cite{Pawlak-Pawlak, Pawlak-th}.
\item[$\Sss$]
the class of all \emph{strong \'Swi\k{a}tkowski functions}, i.e.\textcolor{red}{,} all functions with the following property:
 for all $a<b$ and each $y$ between $f(a)$ and~$f(b)$, there is $x\in (a,b) \cap \co(f)$ with $f(x)=y$~\cite{AMSss}.
\end{description}

 It is known that the following inclusions hold. (See e.g. \cite{AM}.)
\begin{gather}\label{ink}
 \Sss \subset \s\subset \Cq\subset \Ba.
\end{gather}
Moreover, easy examples show that all those inclusions are proper.

Let $I$ and $J$ be non-empty open intervals. 
We say that $f:I\to J$ is \textit{left side surjective} if $f[(\inf I, t)]=J$ for all $t\in I$. Analogously, we say that $f:I\to J$ is \textit{right side surjective} if for all  $t\in I$ we have $f[(t, \sup I)]=J$. A function $f:I\to J$ is a \textit{bi-surjective function}  if it is both left and right side surjective. If, additionally, $f$ is continuous we write $f\in\CBS(I,J)$.
A class of a continuous bi-surjective functions  plays an important role in constructions dealing with \'Swi\k{a}tkowski functions (see, e.g.,\cite{AM}, \cite{Pawlak-th}, \cite{JW}).

\section{Background informations}

It is clear that the class $\Ba$ is closed with respect to pointwise limits. The classes $\LIM(\Cq)$ and $\LIM(\Sss)$ were characterized by Grande \cite{ZG} and Maliszewski \cite{AM},  respectively. The following equalities hold.
\begin{itemize}
\item
$\LIM(\Sss)=\Cq$;
\item
$\LIM(\Cq)=\Ba$.
\end{itemize}

Those facts show that 

\[\Cq\subset \LIM(\s)\subset\Ba.\]
In this section we will show that those inclusions are proper.

\begin{exa}\label{ex1}

For $n\in\mN$ put $A_n:=\left\{\frac{k}{2^n}: k\in\mZ\right\}$ and define
\[f_n(x):=\begin{cases}
-D(x, A_n),&\text{for $x\notin A_n$},\\
1,& \text{for $x\in A_n$}.
\end{cases}\] 

One can see that for each $n\in\mN$ the function $f_n$ satisfies the \'Swi\k{a}tkowski condition. On the other hand $f_n\to f$, where
\[f=\begin{cases}
0,&\text{for $x\notin \bigcup_{n\in\mN}A_n$},\\
1,&\text{for $x\in \bigcup_{n\in\mN}A_n$}.
\end{cases}\]
Since the function $f$ is discontinuous at any point,  it is not cliquish. Hence $\LIM(\s)\ne\Cq$.
\end{exa}

\begin{exa}\label{ex2}
Consider a function $f\colon\mR\to\mR$ given by the formula
\[f(x)=\begin{cases}
0,&\text{for $x\in\mR\setminus\mQ$,}\\
1,&\text{for $x\in\mQ_1$,}\\
2,&\text{for $x\in\mQ_2$,}
\end{cases}\]
where $\mQ_1,\mQ_2$ is a partition of rationals onto two dense sets. 

It is easy to see that $f\in\Ba$.  We shall show that $f$ is not a pointwise limit of any sequence of \'Swi\k{a}tkowski functions. Indeed, suppose there exists a sequence $(f_n)_n$ of \'Swi\k{a}tkowski functions such that $f_n\to f$.  Let 
$$A_n\st =\left\{x\in\mR\setminus\mQ\st  \forall_{m\ge n}\; |f_m(x)|<\frac{1}{2}\right\}.$$
 Of course $\mR\setminus\mQ =\bigcup_n A_n$ and  $A_n\subset A_m$ for $n\leq m$. 
Since the set $\mR\setminus\mQ$ is residual, the Baire Category Theorem yields that there exists $n_0\in\mN$ for which the set $A_{n_0}$ is non-meager, and consequently, it is dense in some non-empty open interval $I$. Fix $x_1\in I\cap\mQ_1$ and $x_2\in I\cap\mQ_2$ with $x_1<x_2$.
There exist numbers $n_i$, $i=1,2$ such that for $n\geq n_i$ we have an inequality $|f_n(x_i)-i|<\frac12$. Then for $N\st =\max\{n_0, n_1, n_2\}$ we have $f_N\notin\s$. In fact, it is easy to observe that $f_N(x)\le \frac{1}{2}$ for every $x\in I\cap \co(f_N)$, thus there is no $x\in (x_1, x_2)\cap\co(f_N)$ with $f_N(x)\in (f_N(x_1), f_N(x_2))$.  Hence $\LIM(\s)\ne\Ba$.
\end{exa}

\section{The main theorem}

Fix $A\subset\mR$, an interval $J\subset\mR$ and $\varepsilon \geq 0$.  We say that a function $f:\mR\to\mR$ satisfies the condition $S(J,A,\varepsilon)$ if 
for each $a,b\in J$ with $f(a)<f(b)$ there exists $x\in A\cap I(a,b)$ such that  $f(x)\in (f(a)-\varepsilon, f(b)+\varepsilon)$. 

Note that $f\in\s$ if and only if $f$ satisfies the condition $S(\mR,\co(f), 0)$. Thus the condition $S(\mR, A, 0)$ can be treated as a generalization of the \'Swi\k{a}tkowski property related to a fixed set $A$. An analogous modification of the strong \'Swi\k{a}tkowski property has been  considered by Marciniak and Szczuka \cite{MaSz}, see also \cite{W-B-I1}.

A function $f$ satisfies the condition  $S(A, \varepsilon)$ if the union of all open intervals $J$ for which $S(J,A,\varepsilon)$ holds is dense in $\mR$.

Let $\mathfrak{S}$ denote the class of all functions $f\in\Ba$ which satisfy the condition $S(A,\varepsilon)$ for all residual sets $A\subset\mR$ and any $\varepsilon>0$.
Observe that $\Cq\subset \mathfrak{S}\subset \Ba$. We will show that $\LIM(\s)=\mathfrak{S}$.

\medskip
The next lemma is probably a part of mathematical folklore. 
\begin{lem}\label{lem1}
Let $f\in\Ba$. Then there exists a  residual $G_\delta$ set $A$ such that $f\rest A$ is continuous. Moreover, each such set $A$ can be extended to a maximal with respect to inclusion set with the same properties.
\end{lem}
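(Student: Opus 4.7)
\emph{Part 1 (existence).} For the first assertion I would use the standard Baire-category construction. Fix a countable base $(V_n)_{n\in\mN}$ of the Euclidean topology on $\mR$. Since $f\in\Ba$, each preimage $f^{-1}(V_n)$ has the Baire property, so one can write $f^{-1}(V_n)=O_n\triangle M_n$ with $O_n$ open and $M_n$ an $F_\sigma$-meager set (any meager set is contained in an $F_\sigma$-meager one, so this may be assumed). Putting $M:=\bigcup_n M_n$ (an $F_\sigma$-meager set) and $A:=\mR\setminus M$ yields a residual $G_\delta$ set for which $f^{-1}(V_n)\cap A=O_n\cap A$ is open in $A$ for every $n$; hence $f\rest A$ is continuous.

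\emph{Part 2 (maximal extension).} Given such an $A$, I would apply Zorn's lemma to the family
$$\F=\{B\subset\mR\st A\subset B,\ B\text{ is residual }G_\delta,\ f\rest B\text{ continuous}\},$$
partially ordered by inclusion; a maximal element is then the desired extension. The delicate point is verifying that every chain $\{B_\alpha\}\subset\F$ has an upper bound in $\F$.

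\emph{Handling the chain.} Set $B^*:=\bigcup_\alpha B_\alpha$; this is residual because it contains $A$. Continuity of $f\rest{B^*}$ should follow from a triangle-inequality argument through the dense set $A$: given $x\in B_{\alpha_0}\subset B^*$ and $\varepsilon>0$, choose an open $U\ni x$ with $|f(z)-f(x)|<\varepsilon/2$ for $z\in B_{\alpha_0}\cap U$. For any $y\in B^*\cap U$, say $y\in B_\beta$, continuity of $f\rest{B_\beta}$ at $y$ provides an open $V\ni y$ with $V\subset U$ and $|f(z)-f(y)|<\varepsilon/2$ for $z\in B_\beta\cap V$; picking $z\in A\cap V$ (nonempty, as $A$ is dense and $A\subset B_\beta\cap B_{\alpha_0}$) and combining the estimates yields $|f(y)-f(x)|<\varepsilon$.

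\emph{Main obstacle.} The hardest step is showing that $B^*$, or some suitable enlargement of it, is a $G_\delta$ set, since arbitrary unions of $G_\delta$ sets need not be $G_\delta$. I would try to handle this either by reducing the chain to a countable cofinal subchain (using a separability argument on $\F$) and then diagonalizing the corresponding sequences of open sets, or by passing from $B^*$ to the candidate
$$\widetilde B:=\{x\in\mR\st \osc_A(f,x)=0\text{ and }\lim_{y\to x,\,y\in A}f(y)=f(x)\},$$
which is the largest conceivable extension of $A$ on which $f$ can possibly be continuous, and then carefully extracting a $G_\delta$ subset of $\widetilde B$ containing every $B_\alpha$. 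Establishing the $G_\delta$ property of the chosen upper bound is where I expect the bulk of the technical work to lie.
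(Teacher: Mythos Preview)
Your Part~1 is the standard argument and matches the paper's one-line citation of this well-known fact.

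For Part~2 the paper does something much more direct than Zorn's lemma: it simply \emph{defines}
\[
B:=\{x\in\mR:\ \osc(f\rest{A\cup\{x\}},x)=0\},
\]
which is exactly your set $\widetilde B$, and declares this to be the maximal extension. That $A\subset B$, that $f\rest B$ is continuous (any two points of $B$ can be compared through a nearby point of the dense set $A$, essentially your own triangle argument), and that $B$ is the unique largest set containing $A$ on which $f$ is continuous are all immediate. So the detour through chains, cofinal subchains and upper bounds is unnecessary: the explicit candidate you wrote down at the very end of your proposal \emph{is} the paper's answer, not merely a device to be fed into Zorn.

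Your suspicion that the $G_\delta$ property is the delicate point is, however, well founded, and here the paper's argument is no more complete than yours: it asserts ``$B\in G_\delta$'' as ``easy to see'' and moves on. In fact this can fail. Take $C$ the Cantor set, $D\subset C$ a countable dense subset, $S=C\setminus D$, $f=\chi_S\in\Ba$, and $A=\mR\setminus C$; then $B=\mR\setminus S$, and $S$ is not $F_\sigma$ (otherwise $D$ would be dense $G_\delta$ in the perfect Polish space $C$, contradicting Baire). Worse, in this example no maximal residual $G_\delta$ extension of $A$ exists at all: any admissible $B'=A\cup T$ with $T\subset D$ and $C\setminus T$ an $F_\sigma$ can always be enlarged by a single point of $D\setminus T$ without destroying the $F_\sigma$ property, yet $T=D$ itself is not admissible. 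So the $G_\delta$ clause in the lemma is a genuine gap shared by both your proposal and the paper; fortunately, the later applications in the paper seem to use only that $A$ is residual and maximal in the sense $A=B$, not that $A$ is $G_\delta$.
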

\begin{proof}
It is well-known  that for every function $f\in\Ba$ there exists a residual $G_\delta$ set $A$ such that $f\rest A$ is continuous. (See e.g. \cite{KKur}.) Now, let 
$$B:=\{ x\in\mR: \osc(f\rest {A\cup\{x\}},x)=0\}.$$ 
It is easy to see that $A\subset B\in G_\delta$, $f\rest B$ is continuous, and $B$ is maximal set with those properties.
\end{proof}

\begin{lem}\label{lem2}
Let $f\in\Ba$. Then for each open interval $I$ and every $\varepsilon\geq 0$ the following conditions are equivalent:
\begin{enumerate}
\item[(i)]
$f$ satisfies the condition $S(I,A,\varepsilon)$ for every residual set $A\subset \mR$;
\item[(ii)]
$f$ satisfies the condition $S(I,A,\varepsilon)$ for every residual set $A\subset\mR$ such that $f\rest A$ is continuous;
\item[(iii)] 
there exists a residual set $A\subset\mR$ such that $f\rest A$ is continuous and $f$ satisfies the condition $S(I,A,\varepsilon)$.
\end{enumerate}
\end{lem}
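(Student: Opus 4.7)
The plan is to establish the implications in the cyclic order $(\mathrm{i})\Rightarrow(\mathrm{ii})\Rightarrow(\mathrm{iii})\Rightarrow(\mathrm{i})$. The first two implications are essentially formal. $(\mathrm{i})\Rightarrow(\mathrm{ii})$ follows simply by restricting the family of residual sets over which the hypothesis is quantified, and $(\mathrm{ii})\Rightarrow(\mathrm{iii})$ is a direct application of Lemma~\ref{lem1}, which produces at least one residual $A\subset\mR$ with $f\rest{A}$ continuous, so the universal statement in (ii) specializes to the existential one in (iii).

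The substantive content is in $(\mathrm{iii})\Rightarrow(\mathrm{i})$. I would fix a residual set $A_0\subset\mR$ such that $f\rest{A_0}$ is continuous and $f$ satisfies $S(I,A_0,\varepsilon)$, and let $A\subset\mR$ be an arbitrary residual set. To verify $S(I,A,\varepsilon)$, I pick $a,b\in I$ with $f(a)<f(b)$. By (iii) there exists $x_0\in A_0\cap I(a,b)$ with $f(x_0)\in(f(a)-\varepsilon,f(b)+\varepsilon)$. Since the target interval is open and $f\rest{A_0}$ is continuous at $x_0$, there is an open neighborhood $U\ni x_0$ with $U\subset I(a,b)$ and $f[U\cap A_0]\subset(f(a)-\varepsilon,f(b)+\varepsilon)$. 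Because $A\cap A_0$ is the intersection of two residual sets, it is residual, hence dense; thus $A\cap A_0\cap U\ne\emptyset$, and any $x$ in this intersection lies in $A\cap I(a,b)$ and satisfies $f(x)\in(f(a)-\varepsilon,f(b)+\varepsilon)$, as required.

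The only place where care is needed is the transfer step in $(\mathrm{iii})\Rightarrow(\mathrm{i})$: one must observe that continuity of $f$ \emph{along} $A_0$, rather than on $\mR$, is exactly what allows the single witness $x_0$ to be widened to a whole open neighborhood whose $f$-image on $A_0$ lies in the prescribed open target. Once that is recognized, moving the witness from $A_0$ to the arbitrary residual $A$ is pure density. The boundary case $\varepsilon=0$ causes no additional difficulty, because $(f(a),f(b))$ is still an open interval containing $f(x_0)$ and continuity of $f\rest{A_0}$ is still available.
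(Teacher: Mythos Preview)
Your argument is correct and follows the same route as the paper: the implications $(\mathrm{i})\Rightarrow(\mathrm{ii})\Rightarrow(\mathrm{iii})$ are dismissed as immediate (the latter via Lemma~\ref{lem1}), and for $(\mathrm{iii})\Rightarrow(\mathrm{i})$ the paper likewise uses continuity of $f\rest{A_0}$ at the witness $x_0$ together with density of $A\cap A_0$ to slide the witness into the arbitrary residual set $A$. Your version merely makes the open-neighborhood step explicit where the paper compresses it into a single sentence.
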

\begin{proof}
Only the implication ``(iii)$\Rightarrow$(i)'' requires a proof. Let $A$ be a residual set such that $f\rest A$ is continuous and $f$ satisfies the condition $S(I,A,\varepsilon)$,  and let $B\subset\mR$ be any residual set. Fix $a,b\in I$ such that $f(a)<f(b)$. By $S(I,A,\varepsilon)$, there exists $x\in A\cap I(a,b)$ such that $f(x)\in (f(a)-\varepsilon, f(b)+\varepsilon)$. Since $A\cap B$ is dense in $\mR$ and $f\rest A$ is continuous, there exists $x_0\in A\cap B\cap I(a,b)$ with $f(x_0)\in (f(a)-\varepsilon, f(b)+\varepsilon)$.
\end{proof}
 
 \begin{cor}\label{lem1,5}
 Assume $f\in\Cq$. Then $f\in \s$ iff the condition $S(\mR,A,0)$ holds for every residual set $A\subset\mR$.
 \end{cor}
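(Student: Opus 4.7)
The plan is to deduce Corollary \ref{lem1,5} directly from Lemma \ref{lem2}, using $A=\co(f)$ as the witnessing residual set. The first observation I would record is that, since $f\in\Cq\subset\Ba$, the set $\co(f)$ is residual in $\mR$: indeed $\co(f)$ is always a $G_\delta$ set, and cliquishness of $f$ is precisely the density of $\co(f)$, so $\co(f)$ is a dense $G_\delta$, hence residual. Moreover, $f\rest{\co(f)}$ is continuous by the very definition of $\co(f)$.

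For the implication ``$f\in\s\Rightarrow$ the condition $S(\mR,A,0)$ holds for every residual $A$'', I would invoke the observation stated just before Lemma \ref{lem2}: $f\in\s$ is equivalent to $f$ satisfying $S(\mR,\co(f),0)$. Together with the preceding paragraph, this supplies condition (iii) of Lemma \ref{lem2} with $I=\mR$, $\varepsilon=0$, and witnessing set $\co(f)$. Since $f\in\Ba$, Lemma \ref{lem2} then yields condition (i), that is, $S(\mR,A,0)$ holds for every residual set $A$.

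For the converse, I would simply specialize the hypothesis to $A=\co(f)$, which is residual by the first paragraph. This gives $S(\mR,\co(f),0)$, which by the same observation is equivalent to $f\in\s$.

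The only mild point to be careful about is that the condition $S(J,A,\varepsilon)$ is phrased for pairs with $f(a)<f(b)$, whereas the \'Swi\k{a}tkowski property is phrased for pairs with $f(a)\ne f(b)$; but this is harmless because $I(a,b)$ is symmetric in $a,b$, so swapping the two endpoints reconciles the formulations. Beyond that, there is no real obstacle: the corollary is essentially a reformulation of Lemma \ref{lem2} applied at the canonical residual set $A=\co(f)$, and its entire content lies in noting that cliquishness of $f$ makes this particular $A$ residual.
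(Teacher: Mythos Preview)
Your proof is correct and follows precisely the route the paper intends: the corollary is stated without proof immediately after Lemma~\ref{lem2}, and your argument is exactly the natural deduction from that lemma, using $A=\co(f)$ as the witnessing residual set (which is legitimate because cliquishness makes $\co(f)$ a dense $G_\delta$). Your remark about the asymmetry $f(a)<f(b)$ versus $f(a)\ne f(b)$ is also appropriate and correctly resolved.
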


 \begin{lem}\label{lem3}
 Let $f:\mR\to\mR$ be a cliquish function. Then for any $\varepsilon>0$ and for every nowhere dense set $E\subset\mR$ there exists a maximal with respect to inclusion family of pairwise disjoint open intervals $\{ I_n: n\in\mN\}$ such that $\diam f[I_n]<\varepsilon$ for all $n\in\mN$ and end-points of any $I_n$ belong to $\mR\setminus E$. Moreover, the union of every such family is dense in $\mR$.
 \end{lem}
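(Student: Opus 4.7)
The plan is to construct the maximal family by a Zorn/greedy argument and then deduce density by contradiction, exploiting cliquishness of $f$ together with the density of $\mR\setminus E$.

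First, let $\mathfrak{F}$ denote the collection of all families of pairwise disjoint open intervals $I\subset\mR$ satisfying $\diam f[I]<\varepsilon$ and having both endpoints in $\mR\setminus E$. Partially order $\mathfrak{F}$ by inclusion. Any chain in $\mathfrak{F}$ has its union as an upper bound, since any two intervals appearing in the union already lie in some common element of the chain and therefore remain pairwise disjoint. Zorn's Lemma supplies a maximal element. Since pairwise disjoint open intervals in $\mR$ each contain a distinct rational, any such family is at most countable, so it can be enumerated as $\{I_n\st n\in\mN\}$.

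For the moreover clause, let $\{I_n\}$ be any maximal element of $\mathfrak{F}$ and suppose, toward a contradiction, that its union $U:=\bigcup_n I_n$ is not dense in $\mR$. Then some non-empty open interval $J$ is disjoint from $U$. Writing $J=(\alpha,\beta)$ and applying cliquishness of $f$ on $J$, we obtain a nondegenerate interval $K\subset J$ with $\diam f[K]<\varepsilon$; passing to the interior of $K$, we may assume $K$ is open. Because $E$ is nowhere dense in $\mR$, its complement $\mR\setminus E$ is dense, so we may choose $a,b\in K\setminus E$ with $a<b$. Then $(a,b)\subset K\subset J$ is disjoint from every $I_n$, both endpoints lie in $\mR\setminus E$, and $\diam f[(a,b)]\le\diam f[K]<\varepsilon$. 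Hence $\{I_n\}\cup\{(a,b)\}$ strictly extends $\{I_n\}$ inside $\mathfrak{F}$, contradicting maximality.

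No real obstacle arises in the argument; the one point requiring a brief comment is that the interval supplied by cliquishness can be trimmed so that its two endpoints miss $E$ without violating the oscillation bound, which is immediate from the density of $\mR\setminus E$ in every non-empty open subinterval and the monotonicity of $\diam f[\cdot]$ under inclusion.
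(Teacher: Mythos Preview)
Your proof is correct and follows essentially the same approach as the paper, which merely sketches ``The first part is an easy consequence of the Kuratowski-Zorn Lemma. The second part follows easily from cliquishness of $f$.'' You have filled in precisely those details: Zorn's Lemma for existence of a maximal family, and the contradiction argument via cliquishness and density of $\mR\setminus E$ for the density claim.
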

 \begin{proof}
 The first part is an easy consequence of the Kuratowski-Zorn Lemma. The second part follows easily from cliquishness of $f$.
 \end{proof}

\begin{thm}\label{main}
Let $f\colon\mR\to\mR$. The following conditions are equivalent:
\begin{enumerate}
\item[(i)] $f\in \LIM(\s)$;
\item[(ii)] $f\in\mathfrak{S}$.
\end{enumerate}
\end{thm}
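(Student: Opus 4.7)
The plan is to treat the two directions separately: (i)$\Rightarrow$(ii) reduces to a Baire-category argument combined with Corollary~\ref{lem1,5}, while (ii)$\Rightarrow$(i) requires an explicit construction of \'Swi\k{a}tkowski approximants built from continuous bi-surjective pieces.

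\emph{For (i)$\Rightarrow$(ii).} Suppose $f_n\in\s$ and $f_n\to f$ pointwise. Since $\Ba$ is closed under pointwise limits, $f\in\Ba$. Fix a residual $A\subset\mR$ and $\varepsilon>0$, set $\eta=\varepsilon/4$, and introduce the simultaneous-closeness sets
$$E_n=\{x\in\mR\st |f_m(x)-f(x)|\le\eta\text{ for all }m\ge n\}.$$
Each $E_n$ has the Baire property and $\bigcup_n E_n=\mR$. Given any non-empty open $V\subset\mR$, the Baire category theorem together with the Baire property of $E_{n_0}$ yields an open subinterval $J\subset V$ in which $E_{n_0}$ is residual. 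I verify $S(J,A,\varepsilon)$: for $a,b\in J$ with $f(a)<f(b)$, I choose $m_0\ge n_0$ with $|f_{m_0}(y)-f(y)|<\min\bigl(\eta,(f(b)-f(a))/4\bigr)$ for $y=a,b$, forcing $f_{m_0}(a)<f_{m_0}(b)$. Applying Corollary~\ref{lem1,5} to $f_{m_0}$ with the residual set $\tilde R=(E_{n_0}\cap A\cap J)\cup(A\setminus J)$ supplies $x\in\tilde R\cap I(a,b)\subset E_{n_0}\cap A$ with $f_{m_0}(x)\in(f_{m_0}(a),f_{m_0}(b))$; combined with $|f_{m_0}(x)-f(x)|\le\eta$, this gives $f(x)\in(f(a)-2\eta,f(b)+2\eta)\subset(f(a)-\varepsilon,f(b)+\varepsilon)$.

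\emph{For (ii)$\Rightarrow$(i).} Given $f\in\mathfrak{S}$, I will build an approximating sequence $f_n\in\s$ with $f_n\to f$. For each $n$, Lemma~\ref{lem1} furnishes a maximal residual $G_\delta$ set $A_n$ on which $f$ is continuous, and the hypothesis $S(A_n,1/n)$ provides the dense open set $U_n=\bigcup\{J\st S(J,A_n,1/n)\text{ holds}\}$. Using continuity of $f\rest A_n$ together with Lemma~\ref{lem3}-style extraction, I select a maximal pairwise disjoint family $\{I_k^n\}_k$ of open intervals contained in $U_n$, with endpoints in $A_n$ and $\diam f[A_n\cap I_k^n]<1/n$. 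Off $\bigcup_kI_k^n$, I set $f_n=f$; on each $I_k^n$ I replace $f$ by an element of $\CBS(I_k^n,K_k^n)$, where the target interval $K_k^n$ matches $f$ at the endpoints of $I_k^n$ and mildly enlarges $f[A_n\cap I_k^n]$ in accordance with the $1/n$-tolerance of $S(A_n,1/n)$. Density of $\bigcup_kI_k^n\subset\co(f_n)$, bi-surjectivity on each $I_k^n$, and the $S(A_n,1/n)$-compatibility of the targets together secure $f_n\in\s$.

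\emph{Main obstacle.} The delicate step is pointwise convergence $f_n\to f$. For $x\in A_n$ that sits inside some $I_k^n$, the diameter bound $\diam f[A_n\cap I_k^n]<1/n$ combined with the enlargement rule for $K_k^n$ keeps $|f_n(x)-f(x)|$ of order $1/n$; for $x\in\mR\setminus\bigcup_nA_n$ (a meager exceptional set) $f(x)$ need not be close to any $A_n$-value, so a diagonal argument must arrange that each such $x$ ultimately becomes an endpoint of some $I_k^n$ or escapes $\bigcup_kI_k^n$ for cofinitely many $n$. Coordinating this diagonal convergence requirement with the rigid \'Swi\k{a}tkowski structure imposed on each $f_n$ is the principal technical hurdle.
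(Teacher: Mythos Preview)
Your argument for (i)$\Rightarrow$(ii) is correct and is essentially the paper's proof: both pass to a residual set where the convergence is uniform up to $\varepsilon/3$ (or $\varepsilon/4$) via Baire category and then invoke Corollary~\ref{lem1,5}. The only cosmetic difference is that the paper fixes in advance a single residual $A$ on which $f$ and all $f_n$ are simultaneously continuous, which makes the sets $A_n$ relatively closed in $A$ rather than merely Baire; your version works just as well.

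For (ii)$\Rightarrow$(i) there is a genuine gap, and it is not only the pointwise-convergence issue you flag. The two requirements you try to impose are in direct conflict. To get $f_n\to f$ at a point $x\in B:=\mR\setminus A$ you must, as you say, eventually exclude $x$ from $\bigcup_k I_k^n$, so that $f_n(x)=f(x)$. But once two such points $a,b\in B$ with $f(a)<f(b)$ both lie outside $\bigcup_k I_k^n$, your $f_n$ needs a continuity point in $I(a,b)$ with value in $(f(a),f(b))$. All continuity points of your $f_n$ lie in the CBS pieces, whose targets $K_k^n$ are $1/n$-neighbourhoods of local values of $f\rest A$; nothing forces these to meet $(f(a),f(b))$. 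Concretely, take $P=\{p_m\}\subset\mQ$ nowhere dense with $p_m\to 0$, set $f\equiv 0$ off $P$, and $f(p_{2m})=1$, $f(p_{2m+1})=2$. Then $f\in\mathfrak{S}$ (indeed $f$ is cliquish), $f\rest A\equiv 0$, so every $K_k^n\subset(-1/n,1/n)$; yet for large $n$ any two points $p_{2m},p_{2m'+1}$ you have excluded from $\bigcup_k I_k^n$ witness the failure of the \'Swi\k{a}tkowski condition for $f_n$, since no continuity point has value in $(1,2)$. The $S(A_n,1/n)$ ``compatibility'' you invoke says nothing here because the dense family of intervals $J$ on which $S(J,A,1/n)$ holds need not contain $a$ and $b$.

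The paper resolves this by an entirely different route. It does not build the approximants directly from $f$. Instead it first replaces $f$ on the set $B\cap C$ (those $x\in B$ for which $f^{-1}[f(x)]$ is somewhere dense near $x$) by $\limsup_{t\to x,\,t\in A}f(t)$, and proves, using $f\in\mathfrak{S}$, that the resulting $\tilde f$ is \emph{cliquish}. It then imports Maliszewski's theorem $\LIM(\Sss)=\Cq$ to get strong \'Swi\k{a}tkowski approximants $\tilde f_n\to\tilde f$, and only afterwards perturbs these by CBS pieces, reinstating the original values of $f$ on $C_n=B_n\cap C$. A further combinatorial fact---that among any three points of $B\cap C$ with pairwise distinct $f$-values the associated intervals $I_x$ have empty common intersection, so that $D=\{\inf I_x\}$ is nowhere dense---lets the paper insert extra CBS pieces with target all of $\mR$ near $\cl(D)$, which is exactly what handles the case $\{a,b\}\subset C_n$ (your problematic case). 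None of these ingredients---the cliquish modification $\tilde f$, the appeal to $\LIM(\Sss)=\Cq$, the analysis of $C$ and $D$---appears in your sketch, and some device of this kind is unavoidable.
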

\begin{proof}
``(i)$\Rightarrow$(ii)''
Let $f=\lim_nf_n$ for some sequence $(f_n)_n\subset\s$. Then $f$ and all $f_n$ have the Baire property, thus  there exists a residual set $A$ such that $f\rest{A}$ and all $f_n\rest{A}$ are continuous. By Lemma~\ref{lem2}, it is enough to prove that $S(A,\varepsilon)$ holds for $f$ and for any $\varepsilon>0$. Fix $\varepsilon>0$ and a non-empty open interval $I$.

For every $n\in\mN$ define 
$$A_n=\left\{x\in I\cap A\st \forall_{k\geq n} |f_k(x)-f(x)|\le \frac{\varepsilon}{3}\right\}.$$
Note that sets $A_n$ are closed in $A$ and $A\cap I=\bigcup_n A_n$. Since $A$ is non-meager, there exists $n_0$ such that $A_{n_0}$ is dense in some non-degenerate interval $J\subset I$, and consequently, $J\cap A=J\cap A_{n_0}$.
We will verify that $S(J,A,\varepsilon)$ holds. Fix $a,b\in J$ such that $f(a)<f(b)$.
Since $f_n(a)\to f(a)$ and $f_n(b)\to f(b)$, there is $N\ge n_0$ such that $f_N(a)<f_N(b)$ and
  $$|f_N(a)-f(a)|<\frac{\varepsilon}{3}  \text{ and }  |f_N(b)-f(b)|<\frac{\varepsilon}{3}.$$
By the \'Swi\k{a}tkowski property of $f_N$ and Corollary~\ref{lem1,5},
there is $x\in A_{n_0}\cap I(a,b)$ with $f_N(x)\in (f_N(a),f_N(b))$ and we have 
 $$f(a)-\varepsilon < f_N(a)-\frac{2}{3}\varepsilon< f_N(x)-\frac{2}{3}\varepsilon\le f(x)- \frac {\varepsilon}{3}<f(x),$$
 and similarly, $f(x)<f(b)+\varepsilon$. Thus $f(x)\in (f(a)-\varepsilon,f(x)+\varepsilon)$.
 
 \bigskip
``(ii)$\Rightarrow$(i)''
Let $f\in\mathfrak{S}$. 
Let $A$ be a~maximal with respect to inclusion residual $G_\delta$ set such that $f\rest{A}$ is continuous. Then $B:=\mR\setminus A=\bigcup_n B_n$, where 
$(B_n)_n$ is an increasing sequence of nowhere dense sets. 

For every $y\in\mR$ define 
$$\tilde{C}_y:=f^{-1}[y]\cap \interior(\cl(f^{-1}[y]))$$
 and set
$$C:= \bigcup_{x\in B}\tilde{C}_{f(x)}.$$
Observe that, by the maximality of $A$, for every $x\in B\cap C$ we have 
$$x\in\interior\left(\cl\left(f^{-1}[f(x)]\cap B\right)\right).$$
 For every $x\in B\cap C$ let 
$I_x$ denote the connected component of the set 
$$\interior(\cl(f^{-1}[f(x)]\cap B))$$
 containing $x$. Moreover, set $C_n=B_n\cap C$.

Observe that the set $E=\{ x\in\mR: \limsup_{t\to x, t\in A} f(t)= \pm\infty\}$ is nowhere dense.
Let us consider the following function $\tilde{f}:\mR\to\mR$.
\[\tilde{f}=\begin{cases}
f(x), &\text{for $x\in (\mR\setminus B\cap C)\cup E $,}\\
\limsup_{t\to x, t\in A} f(t), &\text{for $x\in B\cap C\setminus E$.}
\end{cases}\]
We claim that $\tilde{f}$ is cliquish. Indeed, otherwise there exist $\varepsilon>0$ and an interval $I$ such that for each $J\subset I$ we have 
$\diam \tilde{f}[J]\ge\varepsilon$. Since $f\rest{A}$ is continuous, there exists a non-empty open interval $U\subset I\setminus E$ with  $\diam f[U\cap 
A]<\frac{\varepsilon}{4}$. 
Since $\tilde{f}[J]\subset \cl(f[J])$, $\diam {f[J]}\ge\varepsilon$ for any $J\subset U$,  and consequently, there is a 
dense in $U$ set $\Delta\subset B\setminus C$  such that for every $x\in \Delta$  either $f(x)< \inf f[A\cap U]-\frac{\varepsilon}{4}$, or $f(x)> \sup f[A\cap 
U]+\frac{\varepsilon}{4}$. Let 
\begin{eqnarray*}
\Delta_- &:= &\left\{ x\in \Delta: f(x)< \inf f[A\cap U]-\frac{\varepsilon}{4}\right\};\\
\Delta_+ & := &\left\{ x\in \Delta: f(x)> \sup f[A\cap U]+\frac{\varepsilon}{4}\right\}.
\end{eqnarray*}
Fix an non-empty open interval $J\subset U$. 
Since $\Delta=\Delta_-\cup \Delta_+$, thus either $\Delta_-$ or $\Delta_+$, say $\Delta_+$, is dense in some subinterval $J_0\subset J$. 
Observe that $f$ is not constant on $J_0\cap \Delta_+$, because otherwise $J_0\cap \Delta_+ \subset C$. Fix $a,b\in J_0\cap \Delta_+$ with $f(a)<f(b)$ and observe that $f(x)\le f(a)-\frac{\varepsilon}{4}$ for any $x\in A\cap J_0$, hence $f[A\cap J_0]\cap (f(a), f(b))=\emptyset$. Thus for any subinterval $J\subset U$ the function $f$ does not satisfy the condition $S(J,A,\frac{\varepsilon}{8})$, 
contrary with $f\in\mathfrak{S}$.

\bigskip
Since $\tilde{f}\in\Cq$, there exists a sequence $(\tilde{f}_n)_n$ of strong \'Swi\k{a}tkowski functions such that $\tilde{f}_n\to\tilde{f}$ \cite[Corollary 6.]{AM}.
We will modify functions $\tilde{f}_n$ using a~standard trick with bi-surjective functions (see, e.g.~\cite{JW}). So,  for every $n\in\mN$ let $\{I_m^n: m\in\mN\}$ be a maximal family (with respect to inclusion) of pairwise disjoint  open intervals contained in the set $\mR\setminus B_n$ with end-points in $A$ and $\diam \tilde{f_n}[I_m^n] <\frac{1}{4n}$.
Since $\tilde{f}_n$ is cliquish, Lemma~\ref{lem3} yields   $\Cl(\bigcup_{m\in\mN} I_m^n)=\mR$. Next, for every $m\in\mN$ with $I_m^n\ne\emptyset$ choose $x_m^n\in I_m^n\cap A$. Finally, choose $\tilde{f}_m^n\in\CBS( I_m^n, (\tilde{f_n}(x^n_m)-\frac{1}{2n}, \tilde{f}_n(x^n_m)+\frac{1}{2n}))$ and define a function $g_n$. 

\[g_n(x)=\begin{cases}
\tilde{f}_m^n(x), &\text{for $x\in I_m^n$, $m\in\mN$,}\\
f(x), &\text{for $x\in C_n$,}\\
\tilde{f_n}(x), &\text{in other cases}.
\end{cases}\]

We claim that if $f(x)\ne f(y)\ne f(z)\ne f(x)$ for some $x,y,z\in B\cap C$, then $I_x\cap I_y\cap I_z=\emptyset$. Indeed, suppose that $I_x\cap I_y\cap I_z\ne\emptyset$. First observe that there exists $a\in A\cap I_x\cap I_y\cap I_z$ such that $f(a)\notin\{f(x), f(y), f(z)\}$.
In fact, suppose that $f[A\cap I_x\cap I_y\cap I_z]\subset\{f(x),f(y),f(z)\}$. Then there is an interval $J\subset I_x\cap I_y\cap I_z$ with $f$ being constant on $J\cap A$, say $f(a)=f(x)$ for $a\in J\cap A$. Choose  $s\in J\cap f^{-1}(f(x))\cap B$. Then $f\rest {(A\cup\{ s\})}$ is continuous, contrary to the maximality of $A$. Hence  $f(a)\notin\{f(x), f(y), f(z)\}$.
Take 
$$\varepsilon :=\frac{ \min\left\{|f(a)-f(x)|, |f(a)-f(y)|, |f(a)-f(z)|\right\}}{2}.$$
 Let $U\subset I_x\cap I_y \cap I_z$ be a neighborhood of $a$ such that $\diam f[A\cap U]<\varepsilon$ and let $J$ be any subinterval of $U$. Choose $\bar{x},\bar{y},\bar{z}\in U$ such with$f(\bar{t})=f(t)$ for $t\in\{x,y,z\}$.  Changing, possibly, the names we can assume that $f(a)<f(\bar{x})<f(\bar{y})$. Then  
$$\left(f(\bar{x})-\varepsilon, f(\bar{y})+\varepsilon\right)\cap f\left[I\left(\bar{x}, \bar{y}\right)\cap A\right] =\emptyset.$$
Thus $f$ satisfies the condition $S(J,A,\varepsilon)$ for no subinterval $J\subset U$, contrary to $f\in\mathfrak{S}$.

\bigskip
In the next step we claim that the set 
\[D:= \left\{ \inf I_x: x\in B\cap C \right\}\cap\mR\]
is nowhere dense. Indeed, take an arbitrary open set $U$ and $x\in U\cap D$. If there is no $y\in D\cap U\cap I_x$ then $I_x\cap U$ is a non-empty open set disjoint with $D$. Otherwise, by the previous claim, $D$ is disjoint with $I_x\cap I_y$.

Let $\I$ denote the family of connected components of the set $\mR\setminus \cl(D)$. For $I\in\I$ and $n\in\mN$ fix a function $f_n^I\in\CBS((\inf I, \inf I+\frac{|I|}{n}), \mR)$. 
Finally, for every $n\in\mN$ define a function $f_n:\mR\to\mR$ by

\[f_n(x)=\begin{cases}
f_n^I(x), &\text{for $x\in (\inf I, \inf I+\frac{|I|}{n})$, $I\in\I$,}\\
g_n, &\text{in the oposite case.}
\end{cases}\]

Fix $n\in\mN$. We shall show that $f_n\in\s$. Take $a<b$ and assume that $f_n(a)<f_n(b)$ (the second case is analogous). We have to consider a few cases.

\bigskip
\textsc{Case I.}  
$(a,b)\cap\cl(D)\ne\emptyset$. Let $I$ be a component of $\mR\setminus\cl(D)$ with $\inf I\in (a,b)$. Then for every $y\in\mR$ there is $t\in (a,b)\cap I$ such that $f_n(t)=f_n^I(t)=y$, thus the \'Swi\k{a}tkowski condition is satisfied.

\bigskip
\textsc{Case II.}
$(a,b)\cap\cl(D)=\emptyset$ and $(a,b)\cap (\inf I, \inf I+\frac{|I|}{n})\ne\emptyset$ for some component $I$ of $\mR\setminus\cl(D)$. Then we have two subcases.

\medskip
\textsc{II.1.}
$(a,b)\subset (\inf I, \inf I+\frac{|I|}{n})$. Then $f_n$ agrees with $f_n^I$ on $(a,b)$, thus it is continuous on $(a,b)$, so the \'Swi\k{a}tkowski condition holds.

\medskip
\textsc{II.2.} 
$\inf I+\frac{|I|}{n}\in (a,b)$. Then for every $y\in\mR$ there is $t\in (a,b)\cap I$ such that $f_n(t)=f_n^I(t)=y$.

\bigskip
\textsc{Case III.}
$(a,b)\subset I\setminus (\inf I, \inf I+\frac{|I|}{n})$ for for some component $I$ of $\mR\setminus\cl(D)$. Then $f_n(x)=g_n(x)$ for $x\in (a,b)$.
Two subcases may occur.

\medskip
\textsc{III.1.}
$\{ a,b\}\not\subset C_n$. We may assume that $a\not\in C_n$ (the other case is analogous). 
If $a\in I_n^m$ for some $m\in\mN$, then the \'Swi\k{a}tkowski condition easily holds. If not, $f_n(a)=g_n(a)=\tilde{f}_n(a)$, and, 
by the strong \'Swi\k{a}tkowski property of $\tilde{f}_n$ there exists $t\in (a,b)$ such that $\tilde{f}_n(t)\in (f_n(a)-\frac{1}{8n}, f_n(b)+\frac{1}{8n})$. By the construction of the family $(f_n^m)_m$ there exists $m\in\mN$ and $s\in I_n^m$ such that $I_n^m\subset (a,b)$ and  $\tilde{f}_n^m(s)\in (f_n(a), f_n(b))$. Since $ \tilde{f}_n^m(s)=f_n(s)$ and $I_n^m\subset\co(f_n)$ the \'Swi\k{a}tkowski condition holds.

 \medskip
 \textsc{III.2.} $\{ a,b\}\subset C_n$. Then $\inf I_b<a<b$ and  $f_n(t)=g_n(t)=f(t)$ for $t\in\{ a,b\}$, so $f(a)<f(b)$. For $\varepsilon=\frac{1}{8n}$ there is an open interval $J\subset (a,b)\cap I_a\cap I_b$ such that $f$ satisfies the condition $S(J,A,\varepsilon)$. We may assume that $J\subset I^n_m$ for some $m$. Fix $x_a\in J\cap f^{-1}[f(a)]$ and $x_b\in J\cap f^{-1}[f(b)]$. By $S(J,A,\varepsilon)$, there exists ${x}\in J\cap A$ with $f({x})\in (f(a)-\varepsilon, f(b)+\varepsilon)$. But then there exist $m\in\mN$ and $t\in I_n^m$ such that $I_n^m\subset J$ and $f_n(t)=g_n(t)=\tilde{f}^n_m(t)$, thus $t\in\co(f_n)$ and $f_n(t)\in (f_n(a),f_n(b))$, so the \'Swi\k{a}tkowski condition is satisfied.  

 \bigskip
Finally observe that $f_n\to f$. In fact, fix $x\in\mR$. Observe that if $x\not\in B\cap C$ then $|g_n(x) - \tilde{f}_n(x)|<\frac{1}{n}$, thus $$\lim_{n\to\infty}g_n(x)=\lim_{n\to\infty}\tilde{f}_n(x)=\tilde{f}(x)=f(x).$$ Moreover, there exists $n_0$ such that $f_n(x)=g_n(x)$ for $n>n_0$, hence $\lim_nf_n(x)=f(x)$. If $x\in B\cap C$ then there is $n_0$ such that $x\in C_n$ for $n>n_0$ and then $f_n(x)=g_n(x)=f(x)$.
 \end{proof}

\begin{cor}
If $B\subset\mR$ is a set with the Baire property then the characteristic function of $B$ is a pointwise limit of a sequence of \'Swi\k{a}tkowski functions.
\end{cor}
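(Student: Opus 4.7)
My plan is to invoke Theorem~\ref{main}: it asserts $\LIM(\s) = \mathfrak{S}$, so it suffices to check that $\chi_B \in \mathfrak{S}$, meaning $\chi_B \in \Ba$ and $\chi_B$ satisfies the condition $S(A,\varepsilon)$ for every residual $A \subset \mR$ and every $\varepsilon > 0$. The Baire property of $\chi_B$ is immediate, because every preimage $\chi_B^{-1}[U]$ for $U \subset \mR$ open is one of $\emptyset$, $B$, $\mR\setminus B$, or $\mR$, and all four sets belong to $\mathcal{B}$ by assumption on $B$.

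For the main point I would actually verify the a priori stronger statement that $\chi_B$ satisfies $S(\mR, A, \varepsilon)$, which trivially forces $S(A,\varepsilon)$ (the union of the open intervals $J$ for which $S(J,A,\varepsilon)$ holds is then all of $\mR$, hence certainly dense). So fix $a, b \in \mR$ with $\chi_B(a) < \chi_B(b)$; the only possibility is $\chi_B(a) = 0$ and $\chi_B(b) = 1$. Since $A$ is residual, hence dense, the intersection $A \cap I(a,b)$ is non-empty, and any $x$ chosen from it satisfies $\chi_B(x) \in \{0,1\} \subset (-\varepsilon, 1+\varepsilon) = (\chi_B(a)-\varepsilon, \chi_B(b)+\varepsilon)$, as required.

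There is essentially no obstacle to overcome: the positive tolerance $\varepsilon$ in the definition of $\mathfrak{S}$ automatically absorbs both of the two possible values of $\chi_B$ at the intermediate point, so the argument collapses to a one-line density observation. The corollary therefore illustrates the reach of Theorem~\ref{main}, displaying in a single stroke that every Baire-measurable $\{0,1\}$-valued function is a pointwise limit of \'Swi\k{a}tkowski functions.
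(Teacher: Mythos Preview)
Your proof is correct and follows essentially the same route as the paper: both rely on the single observation that $\chi_B(a)<\chi_B(b)$ forces $\chi_B(a)=0$, $\chi_B(b)=1$, so $\{0,1\}\subset(\chi_B(a)-\varepsilon,\chi_B(b)+\varepsilon)$ makes the condition $S$ trivially satisfied. The only cosmetic difference is that the paper exhibits one particular residual set $A$ on which $\chi_B$ is continuous (implicitly appealing to Lemma~\ref{lem2}), whereas you verify $S(\mR,A,\varepsilon)$ directly for an arbitrary residual $A$ using its density; your version is thus slightly more self-contained.
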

\begin{proof}
Let $\chi_B$ be the characteristic function of $B$.
Since $B\in\B$, 
there are an open set $U$ and a meager set $M$ with $B=U\triangle M$. 
Let $A=\mR\setminus (M\cup \fr(U))$. Clearly $A$ is residual and $\chi_B\rest{A}$ 
is continuous. Fix $a,b\in\mR$ with $\chi_B(a)<\chi_B(b)$. Then $\chi_B(a)=0$ and $\chi_B(b)=1$, so for any $\varepsilon>0$ we have $\chi_B[\mR]=\{0,1\}\subset (\chi_B(a)-\varepsilon,\chi_B(b)+\varepsilon)$, hence $\chi_B$ satisfies the condition $S(A,\varepsilon)$.
\end{proof}

\section{Baire system generated by the family $\s$}
For a family $\mathcal{F}$ of real-valued functions defined on $X$ there is a smallest
family $\mathcal{B}(\mathcal{F})$ of all real-valued functions defined on $\mR$
which contains $\mathcal{F}$ and which is closed under the process of taking
limits of sequences. This family is called the \emph{Baire
system} generated by $\mathcal{F}$. For a given $\mathcal{F}$  let us define
\begin{itemize}
\item
$\mathcal{B}_0(\mathcal{F})=\mathcal{F}$;
\item
$\mathcal{B}_\alpha(\mathcal{F})=\LIM\left(\bigcup_{\beta<\alpha}\mathcal{B}_\beta(\mathcal{F})\right)$
for $\alpha>0$.
\end{itemize}
Then $\mathcal{B}(\mathcal{F})=\bigcup_{\alpha<\omega_1}\mathcal{B}_\alpha(\mathcal{F})$.
This system was describe in 1899 by Baire in
the case when $\mathcal{F}$ is the
family of all continuous functions. The minimal ordinal $\alpha\le \omega_1$ with $\mathcal{B}_{\alpha}(\mathcal{F})=\bigcup_{\beta<\alpha}\mathcal{B}_\beta(\mathcal{F})$ is called \emph{Baire order} of the family $\mathcal{F}$.

\begin{thm}
We have
\begin{enumerate}
\item[(i)]
$\mathcal{B}_1(\s)=\mathfrak{S}$;
\item[(ii)]
$\mathcal{B}_\alpha(\s)=\Ba$ for $\alpha>1$.
\end{enumerate}
Hence the Baire order of the family of all \'Swi\k{a}tkowski functions is equal to 2.
\end{thm}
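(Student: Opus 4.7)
The plan is to read off part~(i) directly from Theorem~\ref{main} and then to establish part~(ii) by a short transfinite induction on $\alpha$, using the inclusions $\Cq\subset\mathfrak{S}\subset\Ba$ together with the two facts recalled in Section~3: Grande's theorem $\LIM(\Cq)=\Ba$ and the closure of $\Ba$ under pointwise limits. Essentially all substantive content has already been produced; this last theorem is a tidy consequence of the work preceding it.

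For (i), by the inductive definition of the Baire hierarchy we have $\mathcal{B}_1(\s)=\LIM(\mathcal{B}_0(\s))=\LIM(\s)$, and Theorem~\ref{main} identifies the right-hand side with $\mathfrak{S}$. For the base case $\alpha=2$ of (ii), the inclusion $\s\subset\mathfrak{S}$ gives
\[\mathcal{B}_2(\s)=\LIM(\mathcal{B}_0(\s)\cup\mathcal{B}_1(\s))=\LIM(\s\cup\mathfrak{S})=\LIM(\mathfrak{S}),\]
and then a sandwich argument
\[\Ba=\LIM(\Cq)\subset\LIM(\mathfrak{S})\subset\LIM(\Ba)=\Ba\]
(using $\Cq\subset\mathfrak{S}\subset\Ba$ at the two inner steps and Grande's equality together with closure of $\Ba$ at the two ends) delivers $\mathcal{B}_2(\s)=\Ba$.

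For $\alpha>2$, an obvious transfinite induction shows $\bigcup_{\beta<\alpha}\mathcal{B}_\beta(\s)=\Ba$, so $\mathcal{B}_\alpha(\s)=\LIM(\Ba)=\Ba$. The Baire order assertion then follows from (i) and (ii): Example~\ref{ex2} supplies a function in $\Ba\setminus\mathfrak{S}$, so $\mathcal{B}_1(\s)=\mathfrak{S}\subsetneq\Ba=\mathcal{B}_2(\s)$, while $\mathcal{B}_\alpha(\s)=\Ba$ for every $\alpha\ge 2$, so the hierarchy stabilizes precisely at level~$2$. There is no real obstacle to overcome here; the genuine content sits in Theorem~\ref{main} and in the cited equality $\LIM(\Cq)=\Ba$, and what remains is just the bookkeeping sketched above.
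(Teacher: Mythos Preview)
Your argument is correct and follows essentially the same route as the paper: part~(i) is Theorem~\ref{main}, and part~(ii) is the same sandwich $\Ba=\LIM(\Cq)\subset\LIM(\mathfrak{S})\subset\LIM(\Ba)=\Ba$ together with closure of $\Ba$ under pointwise limits. Your version is in fact a bit more explicit than the paper's, since you spell out the transfinite step for $\alpha>2$ and invoke Example~\ref{ex2} to justify the strictness $\mathfrak{S}\subsetneq\Ba$ needed for the Baire-order claim.
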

\begin{proof}
The equality (i) follows from Theorem~\ref{main}. Since $\mathfrak{S}\subset\Ba$ and the class of all functions possessing the Baire property is closed with respect to pointwise limits, $\mathcal{B}_2(\s)=\LIM(\mathfrak{S})\subset\Ba$. Since $\Ba\subset \LIM(\Cq)$ \cite{ZG} and $\Cq\subset\mathfrak{S}$, $\Ba\subset\mathcal{B}_2(\s)$. Again, since $\LIM(\Ba)=\Ba$, we have $\mathcal{B}_\alpha(\s)=\Ba$ for $\alpha\ge 1$.
\end{proof}

\section{A generalization: $\tau$-\'Swi\k{a}tkowski functions}
In this section we will consider a slight generalization of the \'Swi\k{a}tkowski property. Let $\tau$ be a fixed topology on $\mR$. 
For a function $f:\mR\to\mR$ let $\co_\tau(f)$ denote the set of all points $x$ at which $f$ is continuous as a function from the space $(\mR,\tau)$ into $\mR$ with the Euclidean topology.

We say that a function $f:\mR\to\mR$ has the \'Swi\k{a}tkowski property with respect to $\tau$ (shortly, $f$ is a $\tau$-\'Swi\k{a}tkowski function) if 
$f\in S(\co_\tau(f),0)$. The class of all $\tau$-\'Swi\k{a}tkowski functions will be denoted by $\s_\tau$.

Note that an analogous modification of the strong \'Swi\k{a}tkowski property has been considered in \cite{ZG1} and \cite{W-B-I}.

\begin{thm}\label{thm-tau}
Let $\tau$ be topology on $\mR$ satisfying the following conditions:
\begin{enumerate}
\item[(i)]
$\tau$ is finer than the Euclidean topology: $\tau_e\subset\tau$;
\item[(ii)]
$\tau\setminus\{\emptyset\}\subset \mathcal{B}\setminus \mathcal{M}$.
\end{enumerate}
Then $\LIM(\s_\tau)=\mathfrak{S}$.
\end{thm}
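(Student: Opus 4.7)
My plan is to split the equality into its two inclusions, dispatch the easy one in a single line, and concentrate on $\LIM(\s_\tau)\subset\mathfrak{S}$. The inclusion $\mathfrak{S}\subset\LIM(\s_\tau)$ is free: since $\tau_e\subset\tau$, every Euclidean neighborhood of a point is a $\tau$-neighborhood, so $\co(f)\subset\co_\tau(f)$ for every $f$, hence $\s\subset\s_\tau$, and Theorem~\ref{main} gives $\mathfrak{S}=\LIM(\s)\subset\LIM(\s_\tau)$.

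The first real step would be an auxiliary lemma: $\s_\tau\subset\Ba$. For $f\in\s_\tau$ the $\tau$-\'Swi\k{a}tkowski property forces $\co_\tau(f)$ to be Euclidean-dense (on any open $(c,d)$ either $f$ is constant and $(c,d)\subset\co(f)$, or the property itself gives a point of $\co_\tau(f)$ in $(c,d)$). Writing $\co_\tau(f)=\bigcap_n G_n$ with $G_n=\{x\colon\osc_\tau(f,x)<1/n\}$, each $G_n$ is $\tau$-open, so by (ii) it has the Baire property and admits a decomposition $G_n=U_n\triangle M_n$ with $U_n\in\tau_e$ and $M_n\in\mathcal{M}$. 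For every non-empty $W\in\tau_e$ the set $G_n\cap W$ is a non-empty $\tau$-open set (it meets the dense set $\co_\tau(f)$) and hence is non-meager by~(ii); the identity $G_n\cap W=(U_n\cap W)\triangle(M_n\cap W)$ together with this non-meagerness rules out $U_n\cap W=\emptyset$, so $U_n$ is Euclidean-dense, $G_n$ is residual, and therefore so is $\co_\tau(f)$. Finally, for any Euclidean-open $V$, the $\tau$-interior $W_V$ of $f^{-1}(V)$ lies in $\mathcal{B}$ by~(ii) and contains $\co_\tau(f)\cap f^{-1}(V)$ by $\tau$-continuity, so $f^{-1}(V)\setminus W_V\subset\mR\setminus\co_\tau(f)$ is meager; thus $f^{-1}(V)\in\mathcal{B}$, and $f\in\Ba$.

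With $\s_\tau\subset\Ba$ in hand, the main argument is a controlled adaptation of the (i)$\Rightarrow$(ii) half of Theorem~\ref{main}. Let $f=\lim_n f_n$ with $f_n\in\s_\tau$; then $f,f_n\in\Ba$. Using Lemma~\ref{lem1} on the countable family $\{f,f_1,f_2,\dots\}$ and intersecting with the residual set $\bigcap_n\co_\tau(f_n)$, I would fix a single residual $G_\delta$ set $A$ on which all these functions are Euclidean-continuous. By Lemma~\ref{lem2} it then suffices to verify $S(A,\varepsilon)$ for this particular $A$ and every $\varepsilon>0$. Fixing $\varepsilon$ and an open interval $I$, the sets
\[
A_n=\Bigl\{x\in I\cap A\colon |f_k(x)-f(x)|\le\tfrac{\varepsilon}{3}\text{ for all }k\ge n\Bigr\}
\]
are closed in $I\cap A$ and cover it, so Baire category produces $n_0$ and an open subinterval $J\subset I$ with $J\cap A\subset A_{n_0}$. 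Given $a,b\in J$ with $f(a)<f(b)$, I would pick $N\ge n_0$ with $f_N(a)<f_N(b)$ and $|f_N(a)-f(a)|,|f_N(b)-f(b)|<\varepsilon/3$. The $\tau$-\'Swi\k{a}tkowski property of $f_N$ furnishes $y\in(a,b)\cap\co_\tau(f_N)$ with $f_N(y)\in(f_N(a),f_N(b))$; by $\tau$-continuity of $f_N$ at $y$ and $\tau_e\subset\tau$, there is a $\tau$-open $V$ with $y\in V\subset(a,b)$ and $f_N[V]\subset(f_N(a),f_N(b))$. Condition~(ii) makes $V$ non-meager, so $V\cap A\ne\emptyset$; any $x\in V\cap A$ lies in $A_{n_0}$, giving $|f_N(x)-f(x)|\le\varepsilon/3$, and the same triangle estimate as in Theorem~\ref{main} yields $f(x)\in(f(a)-\varepsilon,f(b)+\varepsilon)$. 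Since $I$ was arbitrary, $S(A,\varepsilon)$ holds, and hence $f\in\mathfrak{S}$.

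The principal obstacle is precisely the $\s_\tau\subset\Ba$ lemma, and inside it the upgrade from Euclidean density of $\co_\tau(f)$ to Euclidean residuality. This is the unique point where both halves of~(ii) have to act together: the Baire-property half to decompose each $G_n$ as $U_n\triangle M_n$, and the non-meagerness half (applied to $G_n\cap W$ for every $W\in\tau_e$) to force the open component $U_n$ to be Euclidean-dense. Once this is secured, the limit half is essentially the proof of Theorem~\ref{main} in which the use of Corollary~\ref{lem1,5} for $f_N\in\s$ is replaced by the same pattern for $f_N\in\s_\tau$: produce a $\tau$-continuity witness $y$, inflate it to a $\tau$-open neighborhood $V$, and use~(ii) to guarantee that $V$ still intersects the residual set $A$.
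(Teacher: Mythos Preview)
Your proof is correct and follows the same overall architecture as the paper: the inclusion $\mathfrak{S}\subset\LIM(\s_\tau)$ via $\s\subset\s_\tau$, then $\s_\tau\subset\Ba$, then the Baire-category localization to a subinterval $J$ and the passage from a $\tau$-continuity point of $f_N$ to a point of $A_{n_0}$ via a non-meager $\tau$-neighborhood. The limit half of your argument matches the paper's essentially line for line (you additionally arrange $f_n\rest A$ to be continuous and take $A_n$ closed in $A$, whereas the paper just uses that the $A_n$ have the Baire property; both versions work).

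The one genuine difference is how $\s_\tau\subset\Ba$ is obtained. The paper argues by contraposition from the folklore fact that if $f\notin\Ba$ then $f^{-1}[(-\infty,\alpha)]$ and $f^{-1}[(\beta,\infty)]$ are both nowhere meager in some open $U$, which immediately blocks $\tau$-continuity on $U$. You instead prove directly that $\co_\tau(f)$ is residual (writing it as $\bigcap_n G_n$ with $G_n\in\tau$ and using (ii) to force the Euclidean-open part of each $G_n$ to be dense), and then deduce $f^{-1}(V)\in\mathcal{B}$ by comparing it with its $\tau$-interior. Your route is more self-contained and in fact reproduces what the paper later isolates as Lemma~\ref{lem-I-d}; the paper's route is shorter but leans on an external fact. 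Two minor remarks: the intersection with $\bigcap_n\co_\tau(f_n)$ is harmless but unused in your argument, and your ``$(a,b)$'' should be read as $I(a,b)$ since only $f(a)<f(b)$ is assumed, not $a<b$.
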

\begin{proof}
``$\supset$'' Since $\tau_e\subset\tau$, so $\s\subset\s_\tau$ and consequently, $\mathfrak{S}\subset\LIM(\s_\tau)$.

\noindent
``$\subset$'' First let us see that $\s_\tau\subset\Ba$. We will use the following well-known fact.
\begin{fact}
If $f\not\in\Ba$ then there are reals $\alpha<\beta$ such that the sets $A=f^{-1}[(-\infty,\alpha)]$, $B=f^{-1}[(\beta,\infty)]$ are both nowhere meager in some non-empty open set $U\subset \mR$.
\end{fact}
Thus if $f\not\in\Ba$ then the set $\co_\tau(f)$ is not dense in $\mR$ and therefore $f$ is not a~$\tau$-\'Swi\k{a}tkowski function.

Now assume that $f\in\LIM(\s_\tau)$, i.e. there exists a sequence $(f_n)_n$ of $\tau$-\'Swi\k{a}tkowski functions tending to $f$. Then $f\in\Ba$, so there is a residual set $A$ such that $f\rest A$ is continuous.  By Lemma~\ref{lem2}, it is enough to prove that $S(A,\varepsilon)$ holds for any $\varepsilon>0$. Fix $\varepsilon>0$ and a non-empty open interval $I$.
For every $n\in\mN$ define 
$$A_n=\left\{x\in I\cap A\st \forall_{k\geq n} |f_k(x)-f(x)|<\frac{\varepsilon}{3}\right\}.$$
Observe that each $A_n$ has the Baire property, $A\cap I=\bigcup_n A_n$ and $A_n\subset A_{n+1}$. Since $A$ is non-meager, there exists $n_0$ such that $A_{n_0}$ is residual in some non-degenerate interval $J\subset I$. We will verify that $S(J,A,\varepsilon)$ holds. Fix $a,b\in J$ such that $f(a)<f(b)$.
Since $f_n(a)\to f(a)$ and $f_n(b)\to f(b)$, there is $N\ge n_0$ such that $f_N(a)<f_N(b)$ and
  $$|f_N(a)-f(a)|<\frac{\varepsilon}{3}  \text{ and }  |f_N(b)-f(b)|<\frac{\varepsilon}{3}.$$
 By the $\tau$-\'Swi\k{a}tkowski property of $f_N$, there is $x\in\co_\tau(f_N)\cap I(a,b)$ with $f_N(x)\in (f_N(a),f_N(b))$. Let $V\subset I(a,b)$ be a $\tau$-neighborhood of $x$ such that $f_N[V]\subset (f_N(a), f_N(b))$. Since $V\cap A_{n_0}\ne\emptyset$, there exists 
$x_0\in A_{n_0}\cap I(a,b)$ with $f_N(x_0)\in (f_N(a),f_N(b))$ and we have 
 $$f(a)-\varepsilon<f_N(a)-\frac{2}{3}\varepsilon< f_N(x_0)-\frac{2}{3}\varepsilon<f(x_0)- \frac {\varepsilon}{3}<f(x_0),$$
 and similarly, $f(x_0)<f(b)+\varepsilon$. Thus $f(x_0)\in (f(a)-\varepsilon,f(b)+\varepsilon)$.
 \end{proof}

In particular, the following two topologies satisfy assumptions of Theorem~\ref{thm-tau}: 
\begin{description}
\item[$\tau_\ast$]
 the $\ast$-topology of Hashimoto with respect to the ideal $\mathcal{M}$~\cite{HH}. Recall that $$\tau_\ast=
\{ U\setminus M: U\in\tau_e; \; M\in\mathcal{M}\}.$$
\item[$\tau_\mathcal{I}$]
the $\mathcal{I}$-density topology, the category counterpart of the density topology~\cite{PWW}.
\end{description}
Recall that $\tau_e\subset\tau_\ast\subset\tau_\mathcal{I}$ and both inclusions are here proper. 

\begin{exa}\label{exa3}
Let $\mQ_1,\mQ_2$ be a partition of rationals onto two dense sets. Define $Q_i:=\mQ_i\cup\{(\frac{5-2i}{2}+2k)\cdot\pi: k\in\mZ\}$. Then the function
\[f(x)=\begin{cases}
\sin(x),&\text{for $x\in\mR\setminus (Q_1\cup Q_2)$,}\\
(-1)^i2,&\text{for $x\in\ Q_i$, $i=1,2$}
\end{cases}\]
is a $\tau_\ast$-\'Swi\k{a}tkowski function which is not \'Swi\k{a}tkowski.
\end{exa}

\begin{cor}
Although  the families $\s$, $\s_{\tau_\ast}$ are different, their pointwise closures coincide.
\end{cor}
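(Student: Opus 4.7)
The plan is to deduce this corollary directly from Theorem~\ref{main} and Theorem~\ref{thm-tau} together with Example~\ref{exa3}. First I would invoke Example~\ref{exa3} to exhibit an explicit element of $\s_{\tau_\ast}\setminus\s$, which (combined with the inclusion $\s\subset\s_{\tau_\ast}$ following from $\tau_e\subset\tau_\ast$) gives $\s\ne\s_{\tau_\ast}$.

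Next I would verify that the $\ast$-topology of Hashimoto satisfies the two hypotheses of Theorem~\ref{thm-tau}. Condition (i), $\tau_e\subset\tau_\ast$, is immediate from the defining formula $\tau_\ast=\{U\setminus M:U\in\tau_e,\;M\in\mathcal{M}\}$ upon taking $M=\emptyset$. For condition (ii), any non-empty $V\in\tau_\ast$ has the form $V=U\setminus M$ with $U\in\tau_e$ non-empty and $M\in\mathcal{M}$; writing $V=U\triangle(U\cap M)$ displays $V$ as the symmetric difference of an open set and a meager set, so $V\in\B$, while $V\notin\mathcal{M}$ because $U$ is a non-empty open set and $M$ is meager.

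With the hypotheses in force, Theorem~\ref{thm-tau} applied to $\tau=\tau_\ast$ gives $\LIM(\s_{\tau_\ast})=\mathfrak{S}$, and Theorem~\ref{main} gives $\LIM(\s)=\mathfrak{S}$. Combining these two equalities yields $\LIM(\s)=\LIM(\s_{\tau_\ast})$, which is precisely the asserted coincidence of the pointwise closures. This together with $\s\ne\s_{\tau_\ast}$ completes the proof.

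I do not anticipate any real obstacle: once Theorem~\ref{thm-tau} is available, the corollary is essentially bookkeeping. The only mildly delicate point is the verification of hypothesis (ii) for $\tau_\ast$, which reduces to the single observation above that a non-empty Euclidean-open set punctured by a meager set remains in $\B\setminus\mathcal{M}$. (The same reasoning, together with Example~\ref{exa3} replaced by an analogous construction, would give the parallel statement for $\tau_{\mathcal{I}}$.)
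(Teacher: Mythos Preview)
Your proposal is correct and follows exactly the line the paper intends: the corollary is stated without proof because it is immediate from the preceding remarks that $\tau_\ast$ satisfies the hypotheses of Theorem~\ref{thm-tau} (hence $\LIM(\s_{\tau_\ast})=\mathfrak{S}=\LIM(\s)$ by Theorem~\ref{main}) together with Example~\ref{exa3} for the strict inclusion $\s\subsetneq\s_{\tau_\ast}$. Your explicit verification of hypothesis~(ii) for $\tau_\ast$ is a welcome addition that the paper leaves to the reader.
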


\begin{lem}\label{lem-I-d}
Assume $\tau$ is a topology on $\mR$ which satisfies assumptions of Theorem~\ref{thm-tau}. For any function $f:\mR\to\mR$, if the set $\co_\tau(f)$ is dense, then it is residual. 
\end{lem}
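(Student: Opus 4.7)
The plan is to express $\co_\tau(f)$ as a countable intersection of $\tau$-open sets $W_n$ and to prove that each complement $F_n := \mR\setminus W_n$ is Euclidean-meager. Concretely, I will set $W_n := \{x\in\mR : \osc_\tau(f,x) < 1/n\} = \bigcup\{V\in\tau : \diam f[V] < 1/n\}$, so each $W_n$ lies in $\tau$ and $\co_\tau(f) = \bigcap_n W_n$. Because a countable union of meager sets is meager, it then suffices to show that each $F_n$ is meager.

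Next, since $W_n\in\tau$, assumption~(ii) of Theorem~\ref{thm-tau} yields $W_n\in\mathcal{B}$ (trivially when $W_n=\emptyset$), and hence $F_n\in\mathcal{B}$. I would write $F_n = U\triangle M$ with $U$ Euclidean-open and $M$ meager, and suppose toward a contradiction that $F_n$ is non-meager; then $U\neq\emptyset$ (otherwise $F_n=M\in\mathcal{M}$). The key observation is that $U\cap W_n = U\setminus F_n \subset U\triangle F_n = M$, while $U\cap W_n\in\tau$ because $U\in\tau_e\subset\tau$ and $W_n\in\tau$. Thus $U\cap W_n$ is a $\tau$-open meager set, and assumption~(ii) forces $U\cap W_n = \emptyset$, i.e., $U\subset F_n$. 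But then $U$ is a non-empty Euclidean-open set disjoint from $W_n\supset\co_\tau(f)$, contradicting the density of $\co_\tau(f)$. Consequently each $F_n$ is meager, $\bigcup_n F_n$ is meager, and $\co_\tau(f)$ is residual.

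The conceptual heart of the argument---and the step I expect to be the main obstacle---is the combined use of both halves of assumption~(ii): having every $\tau$-open set belong to $\mathcal{B}$ lets us extract a non-empty Euclidean-open ``kernel'' $U$ of $F_n$ from its Baire-property decomposition, while non-meagerness of non-empty $\tau$-open sets converts the meagerness of the $\tau$-open set $U\cap W_n$ into outright emptiness. Once this double use is spotted, the contradiction with density of $\co_\tau(f)$ is immediate.
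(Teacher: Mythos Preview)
Your proof is correct and follows essentially the same route as the paper: both arguments introduce the $\tau$-open sets $W_n=G_n=\{x:\osc_\tau(f,x)<1/n\}$, use assumption~(ii) to place them in $\mathcal{B}$, and then deduce each is residual from density of $\co_\tau(f)$ together with the fact that non-empty $\tau$-open sets are non-meager. The only cosmetic difference is that the paper argues directly that $G_n\cap U$ is non-meager for every non-empty Euclidean-open $U$ (hence $G_n$ is residual), whereas you phrase the same step as a contradiction via the Baire-property decomposition of $F_n$.
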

\begin{proof}
For $n\in\mN$ define 
$$G_n=\{ x\in\mR: \osc_\tau(f,x)<\frac{1}{n}\}.$$
Clearly, $G_n\in\tau$, hence 
$\co_\tau(f)=\bigcap_{n\in\mN}G_n$ is a $G_\delta$ set in the topology $\tau$, so it
has the Baire property, thus it is enough to prove  that $\co_\tau(f)\cap U\not\in\mathcal{M}$ for every non-empty open set $U\in\tau_e$.
Fix $U\in\tau_e$ and $n\in\mN$. Since $\co_\tau$ is dense, $G_n\cap U$ is $\tau$-open and non-empty, hence $G_n\cap U\not\in\mathcal{M}$. Therefore each $G_n$ is residual, thus $\co_\tau(f)$ is residual too.
\end{proof}

\begin{thm}\label{thm-I-density}
Assume $\tau$ is a topology on $\mR$ which satisfies assumptions of Theorem~\ref{thm-tau}. Then if $\tau_\ast\subset\tau$ then $\s_{\tau_\ast}=\s_{\tau}$.
\end{thm}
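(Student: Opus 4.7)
The inclusion $\s_{\tau_\ast}\subset\s_\tau$ is immediate: since $\tau_\ast\subset\tau$, every $\tau_\ast$-neighborhood is a $\tau$-neighborhood, hence $\co_{\tau_\ast}(f)\subset\co_\tau(f)$, and a witness for the $\tau_\ast$-\'Swi\k{a}tkowski condition at a pair $(a,b)$ serves automatically as a witness for the $\tau$-\'Swi\k{a}tkowski condition.

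The content lies in $\s_\tau\subset\s_{\tau_\ast}$. Fix $f\in\s_\tau$. My plan is, first, to show that $\co_{\tau_\ast}(f)$ is residual, and then to transfer any $\co_\tau(f)$-witness into $\co_{\tau_\ast}(f)$ using hypothesis (ii) on $\tau$. For residuality, I will use that $\s_\tau\subset\Ba$ (established inside the proof of Theorem~\ref{thm-tau}), apply Lemma~\ref{lem1} to obtain a residual $G_\delta$ set $A$ with $f\rest{A}$ continuous, and observe the inclusion $A\subset\co_{\tau_\ast}(f)$. The observation is a one-line check: for $x\in A$ and a Euclidean neighborhood $W\ni x$ with $\diam f[W\cap A]<\varepsilon$, the set $W\setminus(\mR\setminus A)$ lies in $\tau_\ast$ (because $\mR\setminus A$ is meager), contains $x$, and $f$ has oscillation $<\varepsilon$ on it. Thus $\co_{\tau_\ast}(f)\supset A$ is residual. (Equivalently, one may cite Lemma~\ref{lem-I-d} applied to $\tau_\ast$.)

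Now take $a<b$ with $f(a)<f(b)$ (the opposite inequality is symmetric). Using the $\tau$-\'Swi\k{a}tkowski property of $f$, I choose $y\in\co_\tau(f)\cap(a,b)$ with $f(y)\in(f(a),f(b))$ and a $\tau$-neighborhood $V\subset(a,b)$ of $y$ on which $f[V]\subset(f(a),f(b))$. The main obstacle I anticipate is precisely this next step, where I must pass from a $\tau$-open set to a $\tau_\ast$-continuity point. The bridge is hypothesis (ii) on $\tau$: since $V\in\tau\setminus\{\emptyset\}\subset\mathcal{B}\setminus\mathcal{M}$, there exist $U\in\tau_e$ non-empty and a meager $M$ with $U\setminus M\subset V$. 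Residuality of $\co_{\tau_\ast}(f)$ then forces $(U\setminus M)\cap\co_{\tau_\ast}(f)\ne\emptyset$, and any point $x$ in this intersection lies in $V\subset(a,b)$ with $f(x)\in(f(a),f(b))$ and $x\in\co_{\tau_\ast}(f)$, verifying the $\tau_\ast$-\'Swi\k{a}tkowski property.

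Notice the proof uses only assumptions (i), (ii) of Theorem~\ref{thm-tau} and the inclusion $\tau_\ast\subset\tau$; no further structure of either topology is needed. In particular, combined with the known chain $\tau_e\subset\tau_\ast\subset\tau_{\mathcal I}$, this will give $\s_{\tau_\ast}=\s_{\tau_{\mathcal I}}$ as a corollary.
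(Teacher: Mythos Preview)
Your argument is correct and follows essentially the same route as the paper: obtain $f\in\Ba$ from the proof of Theorem~\ref{thm-tau}, produce a residual set of $\tau_\ast$-continuity points, pick a $\tau$-neighborhood $V$ of a $\tau$-\'Swi\k{a}tkowski witness with $f[V]\subset(f(a),f(b))$, and use that $V$ is non-meager with the Baire property to meet the residual set. The only cosmetic difference is that the paper invokes Lemma~\ref{lem-I-d} (for $\tau$) to get residuality of $D=C\cap\co_\tau(f)$, whereas you verify directly that the residual set $A$ from Lemma~\ref{lem1} already sits inside $\co_{\tau_\ast}(f)$, which is slightly more economical.
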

\begin{proof}
``$\subset$'' Since $\tau_\ast\subset\tau$, we have $\s_{\tau_\ast}\subset\s_\tau$. 

``$\supset$'' Assume $f:\mR\to\mR$ has the $\tau$-\'Swi\k{a}tkowski property. In the first part of the proof of Theorem~\ref{thm-tau} it is shown that 
$f\in\Ba$, hence there is a residual set $C\subset\mR$ for which $f\rest C$ is continuous. By Lemma~\ref{lem-I-d}, the set $D:=C\cap\co_\tau(f)$ is 
residual. Then $f\rest D$ is continuous and therefore $f$ is $\tau_\ast$-continuous at each point $x\in D$. To prove that $f$ has the $\tau_\ast$-\'Swi\k{a}tkowski property fix $a,b\in \mR$ with $f(a)<f(b)$. Since $f\in\s_\tau$, there is $x\in \co_\tau(f)\cap I(a,b)$ with $f(x)\in (f(a),f(b))$. Let 
$U\in\tau$ be a $\tau$-neighborhood of $x$ such that $U\subset I(a,b)$ and $f[U]\subset (f(a), f(b))$. Then $U\not\in\mathcal{M}$, hence $U\cap D\ne\emptyset$
, so there is $x_0\in D\subset\co_{\tau_\ast}(f)$ such that $x_0\in I(a,b)$ and $f(x_0)\in (f(a),f(b))$.
\end{proof}

\begin{cor}
We have $\s_{\tau_\mathcal{I}}=\s_{\tau_\ast}$.
\end{cor}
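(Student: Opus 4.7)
The plan is to deduce this corollary as an immediate special case of Theorem~\ref{thm-I-density} by verifying that $\tau_\mathcal{I}$ satisfies all the required hypotheses. Specifically, I need to check three things: (a) $\tau_e \subset \tau_\mathcal{I}$, (b) every non-empty $\tau_\mathcal{I}$-open set has the Baire property and is non-meager, and (c) $\tau_\ast \subset \tau_\mathcal{I}$.

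For (a) and (c), both facts are recorded in the paragraph following Theorem~\ref{thm-tau}, where it is explicitly stated that $\tau_e \subset \tau_\ast \subset \tau_\mathcal{I}$ with both inclusions being proper; these are classical properties of the $\mathcal{I}$-density topology cited from~\cite{PWW}. For (b), the standard theory of the $\mathcal{I}$-density topology gives that every non-empty $\tau_\mathcal{I}$-open set has the Baire property and is non-meager in the Euclidean sense — indeed, this is essentially the $\mathcal{I}$-density analogue of the density theorem, and it is one of the defining features that makes $\tau_\mathcal{I}$ a reasonable category counterpart of the density topology. I would therefore state this as a known fact with a reference to~\cite{PWW}.

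Once these three conditions are checked, Theorem~\ref{thm-I-density} applies directly with $\tau := \tau_\mathcal{I}$, since the assumption $\tau_\ast \subset \tau$ is precisely (c). The conclusion $\s_{\tau_\mathcal{I}} = \s_{\tau_\ast}$ follows.

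There is no genuine obstacle here; the work is purely bookkeeping of which properties of $\tau_\mathcal{I}$ are being invoked. If anything, the only point that merits comment is condition (b), but since this is part of the classical description of the $\mathcal{I}$-density topology, a single-line reference suffices. The proof will therefore be only two or three lines long.
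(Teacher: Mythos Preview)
Your proposal is correct and matches the paper's approach exactly: the paper states the corollary without proof, treating it as immediate from Theorem~\ref{thm-I-density} together with the facts (already recorded in the paragraph following Theorem~\ref{thm-tau}) that $\tau_\mathcal{I}$ satisfies the hypotheses of Theorem~\ref{thm-tau} and that $\tau_\ast\subset\tau_\mathcal{I}$. Your verification of conditions (a)--(c) is precisely this bookkeeping.
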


Finally we will discuss the $\tau$-\'Swi\k{a}tkowski property related to some topologies $\tau$ connected with the Lebesgue measure on the real line.
Interestingly,  an analog of Theorem~\ref{thm-I-density} for the measure does not occur. (We obtain a new example of an incomplete duality between the measure and category.) Let us consider the following topologies:
\begin{description}
\item[$d_\ast$]
 the $\ast$-topology of Hashimoto with respect to the ideal $\mathcal{N}$ of Lebesgue nullsets. Recall that $$d_\ast=
\{ U\setminus N: U\in\tau_e; \; N\in\mathcal{N}\}.$$
\item[$d$]
the density topology, see e.g. \cite{JO}.
\end{description}
Recall that $\tau_e\subset d_\ast\subset d$ and both inclusions are here proper. Thus 
$$\s\subset \s_{d_\ast} \subset \s_d.$$
The function $f$ from Example~\ref{exa3} shows that the inclusion $\s\subset \s_{d_\ast}$ is proper.
The next example shows that the inclusion $\s_{d_\ast} \subset \s_d$ is proper, too.

\begin{exa}
Let $E_1,E_2\subset\mR$ be disjoint $F_\sigma$ set such that for each non-degenerate interval $J\subset\mR$ the sets $J\cap E_i$, $i=1,2$, and $J\setminus (E_1\cup E_2)$ have positive measure (cf. \cite[Section~8]{JO}). Moreover, assume that $\{(\frac{5-2i}{2}+2k)\cdot\pi: k\in\mZ \}\subset E_i$ for $i=1,2$. Then the function
\[f(x)=\begin{cases}
\sin(x),&\text{for $x\in\mR\setminus (E_1\cup E_2)$,}\\
(-1)^i2,&\text{for $x\in\ E_i$, $i=1,2$}
\end{cases}\]
is a $d$-\'Swi\k{a}tkowski function which is not $d_\ast$-\'Swi\k{a}tkowski.
\end{exa}


\end{document}